\documentclass[ 11pt,reqno]{amsart}

\usepackage[T1]{fontenc}
\usepackage{lmodern}
\usepackage{amsmath,amssymb,mathtools}
\usepackage{dsfont}
\usepackage{microtype}
\usepackage[hidelinks]{hyperref}
\usepackage[top=1.5in, bottom=1.5in, left=1.23in, right=1.23in]{geometry}
\usepackage{orcidlink}

\numberwithin{equation}{section}

\newcommand{\C}{\mathbb C}
\newcommand{\R}{\mathbb R}
\newcommand{\Z}{\mathbb Z}
\newcommand{\T}{\mathbb T}
\newcommand{\N}{\mathbb N}
\newcommand{\M}{\mathcal M}
\newcommand{\eps}{\varepsilon}

\newtheorem{theorem}{Theorem}[section]
\newtheorem{proposition}[theorem]{Proposition}
\newtheorem{lemma}[theorem]{Lemma}
\theoremstyle{remark}

\title[Dimension-free estimates for discrete  maximal functions over cubes in $\mathbb Z^d$]
{Dimension-free estimates for discrete  maximal functions over cubes in $\mathbb Z^d$}

\author{Mariusz Mirek \orcidlink{0000-0001-7641-9893}}
\address{Department of Mathematics, Rutgers University, Piscataway, NJ 08854, USA; and Instytut Matematyczny, Uniwersytet Wroc\l awski, Plac Grunwaldzki 2, 50-384 Wroc\l aw, Poland}
\email{mariusz.mirek@rutgers.edu}

\author{Tomasz Z. Szarek \orcidlink{0000-0003-0821-5607}}
\address{Department of Mathematics, University of Georgia, Athens, GA 30602, USA; and Instytut Matematyczny, Uniwersytet Wroc\l awski, Plac Grunwaldzki 2, 50-384 Wroc\l aw, Poland}
\email{tzszarek@uga.edu}

\author{B{\l}a{\.z}ej Wr{\'o}bel \orcidlink{0000-0003-0413-8931}}
\address{Instytut Matematyczny Polskiej Akademii Nauk, ul. \'Sniadeckich 8, 00-656 Warszawa, Poland; and Instytut Matematyczny, Uniwersytet Wroc\l awski, Plac Grunwaldzki 2, 50-384 Wroc\l aw, Poland}
\email{bwrobel@impan.pl}

\thanks{Mariusz Mirek was partially supported by the NSF CAREER grant (DMS-2236493).  
	Tomasz Z.\ Szarek was supported by
	the Simons Foundation grant SFI-MPS-TSM-00013714.
	Tomasz
Z.\ Szarek and B{\l}a{\.z}ej Wr{\'o}bel were supported by the National
Science Centre, Poland, grant Sonata Bis 2022/46/E/ST1/00036. For the purpose of Open Access, the authors have applied a
CC-BY public copyright licence to any Author Accepted Manuscript (AAM) version arising from this submission.}

\begin{document}

\begin{abstract}
In this short note, we establish dimension-free $\ell^p(\mathbb Z^d)$
bounds, for all $p\in(1,\infty]$, for the discrete Hardy--Littlewood
maximal functions associated with cubes in $\mathbb Z^d$, answering a
question that had been open for a while.  The key idea is to prove
dimension-free bounds for the $\ell^p(\mathbb Z^d)$ norms of the
differences of the corresponding averages. This follows from an ad hoc
interpretation of the associated discrete multipliers as a special
continuous family of multipliers to which basic fractional integration and
complex interpolation can be applied.  The same method also yields an
elementary proof of Bourgain's dimension-free $L^p(\mathbb R^d)$
bounds for the Hardy--Littlewood maximal function associated with
cubes in $\mathbb R^d$.

\end{abstract}

\subjclass[2020]{42B25, 42B15}
\keywords{Discrete maximal functions, dimension-free estimates, cube averages, complex interpolation, fractional integration.}

\maketitle

\section{Introduction}

For $N\in\N$, let $Q_N:=[-N,N]^d\cap\Z^d$ denote the cube in $\Z^d$, and for any finitely supported function $f:\Z^d\to\mathbb C$ we define the corresponding discrete averages over cubes by
\[
\M_N f(x):=\frac1{|Q_N|}\sum_{y\in Q_N}f(x-y),
\qquad x\in\Z^d.
\]
Our main result is the following result.

\begin{theorem}\label{thm:maximal}
For every $p\in(1,\infty]$ there exists a constant $C_p>0$,
independent of the dimension $d\in\N$, such that for every
$f\in \ell^p(\Z^d)$, we have
\begin{align}
\label{eq:1}
\big\|\sup_{N\in\N}|\M_Nf|\big\|_{\ell^p(\Z^d)}\le C_p\|f\|_{\ell^p(\Z^d)}.
\end{align}
\end{theorem}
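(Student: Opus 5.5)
The approach is a dyadic reduction followed by a dimension-free square-function (difference) estimate, as the abstract suggests.

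\textbf{Step 1: dyadic reduction.} The multiplier of $\M_N$ is the product
\[
m_N(\xi)=\prod_{j=1}^d \frac{1}{2N+1}\sum_{|k|\le N}e^{2\pi i k\xi_j}=\prod_{j=1}^d D_N(\xi_j),
\]
a tensor product of normalized one-dimensional Dirichlet kernels. For $p=\infty$ the bound is trivial, so fix $p\in(1,\infty)$. For $N\in[2^n,2^{n+1})$ we have pointwise, for $f\ge0$,
\[
\M_N f\le \frac{|Q_{2^{n+1}}|}{|Q_N|}\,\M_{2^{n+1}}f\le \Big(\frac{2^{n+2}+1}{2^{n+1}+1}\Big)^d\M_{2^{n+1}}f,
\]
and this ratio grows exponentially in $d$. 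So the crude reduction fails. Instead, control the full supremum by
\[
\sup_N|\M_N f|\le \sup_n|\M_{2^n}f|+\Big(\sum_n\sup_{2^n\le N<2^{n+1}}|\M_Nf-\M_{2^n}f|^2\Big)^{1/2}.
\]

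\textbf{Step 2: the dyadic supremum.} For the dyadic part, compare $\M_{2^n}$ with a dimension-free maximal operator, e.g.\ via Stein's semigroup theorem. The point is that products of one-dimensional symmetric Markov operators are themselves symmetric Markov operators. Take the product semigroup $P_t$ of one-dimensional Poisson (or heat) semigroups on $\Z$; its maximal function is bounded on $\ell^p(\Z^d)$ uniformly in $d$ by Stein's theorem. One then needs the square function
\[
\Big(\sum_n|\M_{2^n}f-P_{t_n}f|^2\Big)^{1/2}
\]
to be dimension-free on $\ell^2$, and on $\ell^p$ via interpolation. On $\ell^2$ this requires uniform-in-$d$ multiplier estimates of the form
\[
|m_N(\xi)-1|\lesssim N\|\xi\|,\qquad |m_N(\xi)|\lesssim (N\|\xi\|)^{-1},
\]
with a suitable dimension-adapted norm $\|\xi\|$, say $\|\xi\|^2=\sum_j\min(|\xi_j|^2,N^{-2})$-type quantities, following Bourgain's and Bourgain--Mirek--Stein--Wr\'obel's analysis of cube multipliers.

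\textbf{Step 3: the short variation part.} This is the main obstacle. The plan is to prove dimension-free $\ell^p$ bounds for the differences $\M_N-\M_{N'}$ with $N,N'$ in one dyadic block, of the form
\[
\|(\M_{N+1}-\M_N)f\|_{\ell^p}\lesssim_p N^{-\delta_p}\|f\|_{\ell^p},
\]
so that, combined with square-function summation,
\[
\sup_{N\in[2^n,2^{n+1})}|\M_Nf-\M_{2^n}f|\le\sum_{N}|\M_{N+1}f-\M_Nf|
\]
becomes acceptable. Since that crude sum loses a factor $2^n$, the sum must instead be replaced by a fractional-derivative (Sobolev) embedding in the continuous parameter.

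Concretely, embed $N$ into a continuous parameter $t\in[1,2]$ with $N=2^n t$, writing the multiplier as a family $m_{2^nt}(\xi)$ and extending it analytically in a complex parameter. Use the one-dimensional Sobolev inequality
\[
\sup_{t\in[1,2]}|F(t)|\lesssim |F(1)|+\|\partial_t^{\alpha}F\|_{L^q(dt)}\qquad\text{for }\alpha>1/q,
\]
applied fiberwise. On $\ell^2$, the derivative $\partial_t^{\alpha}m$ is dimension-free, since each factor's derivative is controlled and the product structure gives decay. On $\ell^{p_0}$ near $1$ or $\infty$, use only positivity (trivial bounds with a loss in the number of derivatives). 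Stein's complex interpolation between an imaginary-order fractional integral, bounded on all $\ell^p$ via positivity and averaging, and the $\ell^2$ estimate with a full derivative then yields a dimension-free bound for a fractional derivative of order greater than $1/p$ in the appropriate range.

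Finally, bootstrap over $p$ to cover all $p>1$: small $p$ via interpolation with the already obtained $p$-range of maximal bounds, in the style of Bourgain's cube argument. The delicate point is producing the continuous family so that the $\ell^p$ bounds for the imaginary powers are genuinely independent of $d$.
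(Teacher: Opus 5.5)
\textbf{There is a genuine gap, in Step~3, exactly at what you call ``the delicate point''.}

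You never say what $\M_{2^nt}$ means when $2^nt\notin\N$, and the natural candidates fail.
\begin{itemize}
\item The genuine cube averages with real radius are piecewise constant in $t$. They jump by exactly $(\M_{N+1}-\M_N)f$ each time $2^nt$ crosses an integer. A function with a jump has $\partial_t^\alpha F\in L^q$ only for $\alpha<1/q$, which is the wrong side of the Sobolev embedding you want to use.
\item Consider instead the analytic extension $x\mapsto\sin((2r+1)\pi x)/((2r+1)\sin\pi x)$ of the one-dimensional factor to real radius $r$, periodized from $[-\frac12,\frac12]$. For $r\notin\N$ it is not the transform of a positive measure; for $r=\frac12$ its kernel has $\ell^1(\Z)$ norm $4/\pi$. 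Its $d$-fold tensor power therefore has $\ell^1$ norm growing exponentially in $d$, and the positivity endpoint of your interpolation is lost.
\end{itemize}
The missing idea is to interpolate only between consecutive scales. Set $\lambda_{N,s}=(1-s)\mu_N+s\sigma_N$ with $\sigma_N=\frac12(\mathds{1}_{\{N+1\}}+\mathds{1}_{\{-(N+1)\}})$. This is a probability measure for every $s\in[0,1)$, it tensorizes, and $\lambda_{N,\beta_N}=\mu_{N+1}$ for $\beta_N=\frac2{2N+3}$.

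Even then, the $\ell^2$ side does not follow automatically from ``each factor's derivative is controlled''. Taking $k$ derivatives of a $d$-fold product produces $\binom dk$ terms. What bounds their sum by $C_kU^ke^{-cU}$, with $U=\sum_j(1-|a_N(\xi_j)|)$, uniformly in $d$, is the pointwise inequality $|b_N-a_N|\le7(1-|a_N|)$ of Lemma~\ref{lem:oned}.

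\textbf{You also set the target too low.} Aiming only for $\|\M_{N+1}-\M_N\|_{\ell^p\to\ell^p}\lesssim N^{-\delta_p}$ forces you into a fractional Sobolev embedding in the dilation parameter. The paper instead proves the sharp dimension-free bound $C_pN^{-1}$ for all $p\in(1,\infty)$ (Theorem~\ref{thm:difference}).
\begin{itemize}
\item For $1<p<2$, Proposition~\ref{prop:interpolation} runs Stein interpolation between two lines: the $\ell^1$ bound coming from positivity on $\operatorname{Re}z=-\eps$, and the $\ell^2$ bound on $\operatorname{Re}z=n-\eps$.
\item Here $p'/2<n<p'$ and $\eps=2n/p'-1$, chosen so that the interpolation point is the integer $z=1$, where $P_u^1=T_u-u\partial_uT_u$.
\item For $T_tf=\lambda_{N,\rho(t)}^{\otimes d}*f$ this gives $\|t\partial_tT_t\|_{\ell^p\to\ell^p}\le C_p$. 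Integrating over $t\in[1,1+\beta_N]$ yields $C_p\beta_N\le C_pN^{-1}$.
\item The case $p=2$ follows from Plancherel, and $p>2$ by duality.
\end{itemize}
With this decay the crude telescoping sum costs only $\sum_{R\le N<2R}N^{-1}\le2$, so \eqref{eq:4} is immediate.

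The remaining ingredients are taken from \cite{BMSW} and can simply be cited:
\begin{itemize}
\item the dyadic bound (your Step~2);
\item the passage from \eqref{eq:4} to the full supremum via Carbery-type almost orthogonality (what you leave as ``square-function summation'').
\end{itemize}
Your closing ``bootstrap over $p$'' is not an argument. The range $p\in(1,3/2]$ is precisely what \cite{BMSW} could not reach. What unlocks it is the positive product family together with the sharp difference estimate.
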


The range $p\in(1,\infty]$ for which inequality \eqref{eq:1} holds is sharp in the sense that, by combining Aldaz's weak type $(1,1)$ theorem \cite{Ald} with the result of the first and third authors with Kosz and Plewa \cite[Theorem~1]{KMPW}, one can show that the constant in the weak type $(1,1)$ inequality for the  maximal function  from \eqref{eq:1} necessarily depends on the dimension.

The dimension-free estimate for the discrete Hardy--Littlewood maximal function in Theorem~\ref{thm:maximal} was proved by the first and third authors together with Bourgain and Stein in \cite{BMSW} for $p\in(3/2,\infty]$, leaving open the dimension-free bounds \eqref{eq:1} in the remaining range $p\in(1,3/2]$. In this short note, we answer this question in the affirmative by giving an ad hoc proof, which also yields Bourgain's \cite{Bcube} dimension-free $L^p(\mathbb R^d)$ bounds for the Hardy--Littlewood maximal function associated with cubes in $\mathbb R^d$.

The latter result, on the one hand, can be deduced from \cite{KMPW}, where it was shown that the $\ell^p(\mathbb Z^d)$ norm of the discrete Hardy--Littlewood maximal operator associated with averages over any symmetric convex body (specifically over cubes) dominates the $L^p(\mathbb R^d)$ norm of its continuous analogue. On the other hand, our method can be adapted directly to the continuous setting, yielding a proof that significantly simplifies the arguments in \cite{Bcube}, whereas Bourgain's methods in \cite{Bcube} are not, at least directly, adaptable to the discrete setting.

An important outcome of \cite{BMSW} was that dimension-free estimates for the discrete maximal function from \eqref{eq:1}, as well as for other maximal functions associated with convex symmetric bodies \cite{BMSW1, Car1, Mul}, whether discrete or continuous; see \cite[inequality (47)]{BMSW4}, can be reduced to a purely operator-theoretic problem, with no need to deal directly with maximal functions: namely, to establishing dimension-free bounds for the differences of the corresponding averages.
More precisely, it suffices to prove the following estimate.

\begin{theorem}\label{thm:difference}
For every $p\in(1,\infty)$ there exists a constant $C_p>0$,
independent of the dimension $d\in\N$, such that for every
$f\in \ell^p(\Z^d)$ and for all $N\in \N$, we have
\begin{align}
\label{eq:3}
\|\M_{N+1}-\M_N\|_{\ell^p(\Z^d)\to\ell^p(\Z^d)}
\le C_p N^{-1}.
\end{align}
\end{theorem}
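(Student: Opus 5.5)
The plan is to deduce \eqref{eq:3} from a dimension-free bound for the \emph{derivative} of a continuous-parameter extension of the averages $\M_N$. That derivative bound will itself come from complex interpolation between a trivial $\ell^1$ estimate for fractional integrals and an $\ell^2$ estimate for high-order derivatives, proved on the Fourier side.

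\emph{Step 1: the continuous family.} For $t\ge0$ with $n=\lfloor t\rfloor$, define the one-dimensional probability measure
\[
\mu_t:=\frac{1}{2t+1}\Big(\mathds 1_{[-n,n]\cap\Z}+(t-n)(\delta_n+\delta_{-n})\Big).
\]
Set $\M_t f:=f*(\mu_t^{\otimes d})$, so that $\M_t=\M_N$ when $t=N$. Each $\M_t$ is a positive contraction on every $\ell^p(\Z^d)$, and $t\mapsto \mu_t$ is Lipschitz. Since $\M_{N+1}-\M_N=\int_N^{N+1}\partial_t\M_t\,dt$, it suffices to prove
\[
\sup_{t>0}\|t\,\partial_t \M_t\|_{\ell^p\to\ell^p}\le C_p \quad\text{for every } p\in(1,\infty),
\]
and by duality only $p\in(1,2]$ needs treatment.

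\emph{Step 2: an analytic family of fractional derivatives.} For $\alpha\in\C$, define $T^\alpha_t$ as $t^{\alpha}\partial_t^{\alpha}\M_t$ via Riemann--Liouville calculus. For $\operatorname{Re}\alpha<0$ this reads
\[
T^\alpha_t=\frac{t^{\alpha}}{\Gamma(-\alpha)}\int_0^t (t-s)^{-\alpha-1}\M_s\,ds,
\]
continued analytically in $\alpha$. On the line $\operatorname{Re}\alpha=-\eps$ the operator $T^\alpha_t$ is an average of the positive contractions $\M_s$, weighted by a kernel of total mass $\lesssim |\Gamma(\eps)/\Gamma(-\alpha)|$. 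Hence $\|T^\alpha_t\|_{\ell^1\to\ell^1}\le C_\eps e^{\pi|\operatorname{Im}\alpha|}$, uniformly in $t$ and $d$. This growth is admissible for Stein's interpolation theorem.

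\emph{Step 3: the $\ell^2$ estimate.} On the line $\operatorname{Re}\alpha=k$, for a large integer $k$, I would show
\[
\sup_t\|T^\alpha_t\|_{\ell^2\to\ell^2}\le C_k e^{C|\operatorname{Im}\alpha|},
\]
with $C_k$ independent of $d$. Writing $m_t(\xi)=\prod_{j=1}^d \widehat{\mu_t}(\xi_j)$, the Leibniz rule expands $t^k\partial_t^k m_t$ as a sum over distributions of the $k$ derivatives among the coordinates. The needed one-dimensional inputs are the following two bounds, where $\|\cdot\|$ denotes the distance to $\Z$:
\[
|\widehat{\mu_t}(\eta)|\le e^{-c\min(t^2\|\eta\|^2,1)},
\qquad
|t^{l}\partial_t^{l}\widehat{\mu_t}(\eta)|\lesssim_l \min(t^2\|\eta\|^2,1).
\]
With these, a term in which derivatives fall on a set $J$ of at most $k$ coordinates is bounded by
\[
\prod_{j\in J}\min(t^2\|\xi_j\|^2,1)\, e^{-c\sum_{i\notin J}\min(t^2\|\xi_i\|^2,1)}.
\]
Summing over $J$ gives at most $C_k\, S^k e^{-cS}e^{ck}\lesssim_k 1$, where $S=\sum_i\min(t^2\|\xi_i\|^2,1)$; the point is that the bound is independent of $d$. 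The non-integer $\operatorname{Im}\alpha$ is handled by writing $\partial^{k+i\gamma}=\partial^{k+1}\circ I^{1-i\gamma}$ and reusing the same bounds.

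\emph{Step 4: interpolation.} Interpolating between the lines $\operatorname{Re}\alpha=-\eps$ (on $\ell^1$) and $\operatorname{Re}\alpha=k$ (on $\ell^2$) yields a bound for $T^\alpha_t$ on $\ell^p$ at $\operatorname{Re}\alpha=\theta k-(1-\theta)\eps$, where $\theta=2(1-1/p)$. Choosing $k\gtrsim 1/\theta$ makes this order at least $1$. Passing from order $\ge1$ down to order exactly $1$ is done by one more fractional integration of positive order, which only loses a constant. This gives Step 1, and hence \eqref{eq:3}.

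\emph{Main obstacle.} The hard part is Step 3, specifically the two discrete one-dimensional bounds, together with their analogues for the piecewise-defined $\mu_t$ near integer $t$, where $\partial_t$ has jumps. Sign oscillation of the Dirichlet kernel near $\eta=\tfrac12$ is harmless because $|\widehat{\mu_t}|\lesssim 1/t$ there. The delicate issue is the non-smoothness in $t$ at the integers. If needed, I would first try replacing the linear interpolation by a smoother (for instance $C^k$) weighting of the endpoint masses. This keeps $\mu_t$ a probability measure that agrees with the cube average at integer $t$, and makes the derivatives up to order $k$ uniformly controlled.
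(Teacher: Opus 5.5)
\paragraph{The gap is in Step 3.} The failure is not caused by the kinks at the integers. The one-dimensional input $|t^{l}\partial_t^{l}\widehat{\mu_t}(\eta)|\lesssim_l\min(t^2\|\eta\|^2,1)$ is false for every $l\ge2$ and for \emph{every} continuous extension of the cube averages.

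Take $\eta=\tfrac12$. Since $\sum_{|k|\le n}(-1)^k=(-1)^n$, any extension satisfies $f(n):=\widehat{\mu_n}(\tfrac12)=(-1)^n/(2n+1)$. Suppose $f\in C^2$. The mean value theorem gives $\tau_1\in(n,n+1)$ and $\tau_2\in(n+1,n+2)$ with $f'(\tau_1)=f(n+1)-f(n)$ and $f'(\tau_2)=f(n+2)-f(n+1)$. These have opposite signs and $|f'(\tau_2)-f'(\tau_1)|\ge 4/(2n+5)$. Hence some $\tau\in(n,n+2)$ satisfies $\tau^2|f''(\tau)|\ge 2n^2/(2n+5)$. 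With $k$-th differences one gets in the same way $\tau^k|f^{(k)}(\tau)|\ge 2^kn^k/(2n+2k+1)$ for some $\tau\in(n,n+k)$. Testing at $\xi=(\tfrac12,0,\dots,0)$, where $m_t(\xi)=\widehat{\mu_t}(\tfrac12)$, shows that $\sup_t\|t^k\partial_t^k\M_t\|_{\ell^2(\Z^d)\to\ell^2(\Z^d)}=\infty$ for every $k\ge2$ and every $d$.

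Replacing your piecewise-linear masses by $C^k$ weights cannot help. On $[n,n+1]$ the velocity $t\partial_t\widehat{\mu_t}$ is a bounded multiple of $b_n-a_n$, and on $[n+1,n+2]$ of $b_{n+1}-a_{n+1}$. At $\eta=\tfrac12$ these have opposite signs and size about $1$. Any smoothing spreads this $O(1)$ change over a unit $t$-interval at $t\approx n$, which costs a factor of about $t$ per extra derivative. Your remark that the sign oscillation at $\eta=\tfrac12$ is harmless is true for $|\widehat{\mu_t}|$, but not for its dilation derivatives.

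Step 4 cannot avoid the high-order line either. With $\theta=2/p'$, reaching order $1$ from $\operatorname{Re}\alpha=-\eps<0$ forces $\theta k>1$, i.e.\ $k>p'/2>1$. So $k\ge2$ for every $p<2$, and the scheme breaks down on all of $(1,2)$.

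\paragraph{The missing idea.} You need to abandon a single global dilation family. Once the masses in Step 1 are placed at $\pm(n+1)$, your family on $[n,n+1]$ is exactly $\lambda_{n,s}=(1-s)\mu_n+s\sigma_n$ with $s=2(t-n)/(2t+1)$. (As written, with $\delta_{\pm n}$, the family jumps at every integer, so it is not Lipschitz.)

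The paper fixes $N$, uses $s\in[0,s_0]$ itself as the parameter, and never glues different $N$'s together. Because $m_{N,s}(\xi)=\prod_j z_{N,s}(\xi_j)$ is a product of functions affine in $s$, $\partial_s^k m_{N,s}$ is $k!$ times a sum over $k$-element sets $E$ of $\prod_{j\in E}(b_N-a_N)(\xi_j)\prod_{j\notin E}z_{N,s}(\xi_j)$. Lemma~\ref{lem:oned} ($|b_N-a_N|\le7(1-|a_N|)$) together with $|z_{N,s}|\le e^{-(1-s_0)(1-|a_N|)}$ bounds this by $C_{k,s_0}$, uniformly in $N$ and $d$, for every $k$.

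The interpolation is then run in $t=1+s$ near $t=1$, with $\rho$ freezing the family for large $t$, so the weights $t^k$ are harmless. The factor $1/N$ comes from the length $\beta_N\le1/N$ of the $s$-interval joining $\mu_N$ to $\mu_{N+1}$, not from the scaling of $t\partial_t$.

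A smaller point: your final descent from order $>1$ to order $1$, by Riemann--Liouville integration from $0$, meets the nonintegrable singularity $s^{-\alpha}/\Gamma(1-\alpha)$ generated by $\M_0=I$. This is avoidable by tuning $\eps$ so the interpolation lands exactly at order $1$, which is what the paper's choice $p'/2<n<p'$ achieves.
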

Theorem \ref{thm:difference} implies that for every $p\in(1,\infty)$ there exists a constant $C_p>0$,
independent of the dimension $d\in\N$, such that for every
$f\in \ell^p(\Z^d)$, we have
\begin{align}
\label{eq:4}
\sup_{R\in\N}\|\sup_{R\le N<2R}|(\M_N-\M_R)f|\|_{\ell^p(\Z^d)}\le C_p\|f\|_{\ell^p(\Z^d)}.
\end{align}
Inequality \eqref{eq:4}, in turn, combined with the dyadic maximal counterpart of \eqref{eq:1}, that is \eqref{eq:1} with $\sup_{n\in\N}|\M_{2^n}f(x)|$,  and the almost-orthogonality principle in the spirit of Carbery~\cite{Car1}, both already available in \cite{BMSW}, yields Theorem~\ref{thm:maximal}. We therefore concentrate on Theorem~\ref{thm:difference}.

The idea is very simple: we interpret the family $(\M_N)_{N\in\N}$ as part of a more general family of operators with a continuous time parameter, while preserving the product structure, which will later be crucial for deriving the appropriate dimension-free estimates. For this purpose, we observe that $\M_N$ is convolution with $\mu_N^{\otimes d}$, where
\[
\mu_N:=\frac1{2N+1}\sum_{k=-N}^N\mathds{1}_{\{k\}},\qquad N\in\N.
\]
Considering the measure $\sigma_N:=\frac12\bigl(\mathds{1}_{\{N+1\}}+\mathds{1}_{\{-(N+1)\}}\bigr)$, and 
setting $\beta_N:=\frac2{2N+3}$ so that $1-\beta_N=\frac{2N+1}{2N+3}$, we see that
\begin{equation}\label{eq:refresh}
\mu_{N+1}=(1-\beta_N)\mu_N+\beta_N\sigma_N.
\end{equation}
For $0\le s<1$, we  now define a probability measure by
\[
\lambda_{N, s}:=(1-s)\mu_N+s\sigma_N.
\]
Thus $\lambda_{N, 0}=\mu_N$, and $\lambda_{N, \beta_N}=\mu_{N+1}$. The family $(\lambda_{N, s})_{s\in[0,1)}$ provides a natural starting point for exploiting a basic form of fractional integration and complex interpolation, which was a missing ingredient in \cite{BMSW}, in order to prove Theorem~\ref{thm:difference}. We begin by establishing simple properties of $(\lambda_{N, s})_{s\in[0,1)}$. First, we gather some simple notation.
\subsection{Notation} The sets $\N = \{1,2,\ldots \}$, $\Z$, $\R$, $\C$, and $\T:=\R/\Z$ have their standard meanings. For any $p\in[1,\infty)$, we denote by $\ell^p(\Z^d)$ the space of all functions $f:\Z^d\to\C$ whose modulus has summable $p$-th power, while $\ell^\infty(\Z^d)$ denotes the space of all bounded functions on $\Z^d$. The spaces $L^p(\R)$ with $p\in[1,\infty]$, are the standard Lebesgue spaces with respect to Lebesgue measure on $\R$, and similarly for $\T$.
We set $e(\theta):=e^{2\pi i\theta}$ for every $\theta\in\R$. For every $f\in\ell^1(\Z^d)$, we define its Fourier transform by
\[
\mathcal F_{\Z^d}f(\xi):=\sum_{x\in\Z^d}e(x\cdot \xi)f(x),
\qquad \xi\in\T^d,
\]
where $x\cdot \xi:=\sum_{j=1}^d x_j\xi_j$ denotes the standard scalar product. 
Throughout the paper, we shall abbreviate $\mathcal F_{\Z^d}f$ by $\widehat f$. For two functions $f \colon X\to \mathbb C$ and $g \colon X\to (0, \infty)$, we write $f = O(g)$ if there exists a constant $C>0$ such that $|f(x)| \le C g(x)$ for all $x\in X$.

\section*{Acknowledgments} We used AI tools to test various candidates for embedding $\mu_N^{\otimes d}$ into continuous families of measures. All other ideas and intuitions, including the study of dimension-free estimates for the norms of differences of the corresponding averages and the use of fractional integration and complex interpolation, were developed by the authors.

\section{One-dimensional estimates}
Write $a_N(x):=\widehat{\mu_N}(x)$ and $b_N(x):=\widehat{\sigma_N}(x)$, which can be written explicitly
\[
a_N(x)
=\frac1{2N+1}\sum_{k=-N}^N e(kx),
\qquad
b_N(x)=\cos(2\pi(N+1)x).
\]
\begin{lemma}\label{lem:oned}
For every $N\in\N$ and $x\in\T$, we have
\begin{equation}\label{eq:lemma-seven}
|b_N(x)-a_N(x)|\le 7\bigl(1-|a_N(x)|\bigr).
\end{equation}
\end{lemma}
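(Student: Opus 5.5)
The plan is to split according to the size of $|a_N(x)|$. In the first regime $|a_N|$ is bounded away from $1$, and the inequality is trivial. In the second regime $x$ is forced to lie within $O(1/N)$ of $0$, and both sides can be compared with $N^2x^2$ by Taylor-type bounds. Throughout, $x$ is identified with a point of $[-1/2,1/2]$, $M:=2N+1$, and we use the closed form $a_N(x)=\frac{\sin(\pi Mx)}{M\sin(\pi x)}$.

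\emph{Step 1 (trivial regime).} Since $|b_N|\le1$, we always have $|b_N-a_N|\le 1+|a_N|$. The inequality $1+|a_N|\le 7(1-|a_N|)$ holds exactly when $|a_N|\le 3/4$. So \eqref{eq:lemma-seven} holds whenever $|a_N(x)|\le 3/4$.

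\emph{Step 2 (localization).} Suppose $|a_N(x)|>3/4$. Using $|\sin(\pi x)|\ge 2|x|$ on $[-1/2,1/2]$, we get
\[
|a_N(x)|\le \frac{1}{2M|x|},
\]
so $|x|<\frac{2}{3M}$. Consequently $|kx|<1/3$ for every $|k|\le N$. In particular every term $\cos(2\pi kx)$ is positive, so $a_N(x)>0$ and $1-|a_N|=1-a_N$.

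\emph{Step 3 (comparison near $0$).} Write
\[
1-a_N=\frac{2}{M}\sum_{|k|\le N}\sin^2(\pi kx), \qquad 1-b_N=2\sin^2(\pi(N+1)x).
\]
\begin{itemize}
\item Lower bound. Since $|kx|<1/3$, we can use a sharp lower bound $\sin(\pi t)\ge c_0\,\pi t$ on $[0,1/3]$, where $c_0=\frac{3\sqrt3}{2\pi}$ comes from concavity. Together with $\sum_{|k|\le N}k^2=N(N+1)M/3$ this gives
\[
1-a_N\ge \tfrac{2}{3}c_0^2\pi^2N(N+1)x^2.
\]
\item Upper bound. $1-b_N\le 2\pi^2(N+1)^2x^2$.
\end{itemize}
Then
\[
|b_N-a_N|\le (1-b_N)+(1-a_N),
\]
and it suffices to check that
\[
\frac{1-b_N}{1-a_N}\le \frac{3(N+1)}{c_0^2N}\le 6 .
\]

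\emph{Main obstacle.} The difficulty is making the numerical constants close at exactly $7$. The crude bound $\sin(\pi t)\ge 2t$ loses too much: it gives a ratio of about $7.4$ for large $N$ and about $15$ for $N=1$. I would first try to repair this in three ways:
\begin{itemize}
\item Use the sharper concavity bound on $[0,1/3]$ described in Step 3, or $\sin(\pi t)\ge \pi t(1-\pi^2t^2/6)$.
\item Compare $1-b_N$ and $1-a_N$ directly, rather than through the triangle inequality with $1$. For instance, write $b_N-a_N$ as an average over $k$ of $\cos(2\pi(N+1)x)-\cos(2\pi kx)$.
\item Treat the small cases $N=1,2$ by direct inspection of the explicit trigonometric polynomials $a_1,a_2$.
\end{itemize}
If the constant $7$ still does not close, a further option is to move the threshold in Step 1 slightly, trading it against the size of the localization window in Step 2.
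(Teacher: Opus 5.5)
Steps 1 and 2 are essentially fine (one correction below), but Step 3 does not prove what you need. With $c_0^2=27/(4\pi^2)$, your ratio bound is $\frac{3(N+1)}{c_0^2N}=\frac{4\pi^2(N+1)}{9N}$. This equals $8\pi^2/9\approx 8.77$ for $N=1$ and $2\pi^2/3\approx 6.58$ for $N=2$, so it is $\le 6$ only for $N\ge 3$. As written, the proposal proves the lemma for $N\ge 3$ and leaves $N=1,2$ as an unresolved ``main obstacle.''

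For $N=1$ your kind of comparison cannot work at all. Since $1-a_1(x)=\tfrac43\sin^2(\pi x)$ and $1-b_1(x)=2\sin^2(2\pi x)$, one has $1-b_1=6\cos^2(\pi x)\,(1-a_1)$, so the constant $6$ is sharp as $x\to 0$. Any argument that bounds $1-b_1$ via $\sin u\le u$ and bounds $1-a_1$ via a strictly lossy lower bound for $\sin(\pi x)$ therefore lands strictly above $6$. Moving the Step 1 threshold does not help either: $3/4$ is exactly where $1+|a_N|\le 7(1-|a_N|)$ stops holding, and lowering it only enlarges the window. Within your framework, a patch is to use the exact identity above for $N=1$. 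For $N=2$, use the sharper window $|kx|<4/15$; concavity on $[0,4/15]$ then gives a ratio of about $5.72$.

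The correction to Step 2: positivity of every $\cos(2\pi kx)$ does not follow from $|kx|<1/3$, since cosine is negative when $|kx|\in(1/4,1/3)$. The conclusion $a_N>0$ is still true, because $0<\pi M|x|<\pi$ forces $\sin(\pi Mx)$ and $\sin(\pi x)$ to have the same sign.

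The missing idea is the one the paper uses; it makes your second suggested repair precise. It is a subadditivity-and-averaging comparison that is valid for every $x$, with no localization or Taylor bounds. For $1\le k\le N$, write $1-e((N+1)x)=(1-e(kx))+e(kx)\bigl(1-e((N+1-k)x)\bigr)$, so that $|1-e((N+1)x)|^2\le 2|1-e(kx)|^2+2|1-e((N+1-k)x)|^2$. Summing over $k$ and using $|1-e(\theta)|^2=2(1-\cos 2\pi\theta)$ gives $1-b_N\le \frac{2(2N+1)}{N}(1-a_N)\le 6(1-a_N)$. At $N=1$ the constant is exactly $6$, matching the sharp value above. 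The triangle inequality then settles the case $a_N\ge 0$. The case $a_N<0$ follows from $a_N\ge -\tfrac12$, which comes from the closed form together with $\sin(\pi x)\ge 2x$; this does the job of your Steps 1--2.
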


\begin{proof}
Since $a_N$ is real, we may write
\begin{equation}\label{eq:1-a}
1-a_N(x)=\frac2{2N+1}\sum_{k=1}^N
\bigl(1-\cos(2\pi kx)\bigr).
\end{equation}
For $1\le k\le N$, we have $1-e((N+1)x)
=(1-e(kx))
+e(kx)(1-e((N+1-k)x))$,
so
\[
|1-e((N+1)x)|^2
\le
2|1-e(kx)|^2
+2|1-e((N+1-k)x)|^2.
\]
Summing in $1\le k\le N$ gives
\[
N|1-e((N+1)x)|^2
\le4\sum_{k=1}^N|1-e(kx)|^2.
\]
Since $|1-e(\theta)|^2=(1-e(\theta))(1-e(-\theta))=2(1-\cos(2\pi \theta))$, then by \eqref{eq:1-a}, we obtain
\begin{align*}
1-b_N(x)
&\le\frac4N\sum_{k=1}^N
\bigl(1-\cos(2\pi kx)\bigr)\notag\\
&=\frac{2(2N+1)}N(1-a_N(x))\notag\le6(1-a_N(x)).
\end{align*}
If $a_N(x)\ge0$, then by the previous inequality, we obtain
\begin{align*}
|b_N(x)-a_N(x)|
\le(1-a_N(x))+(1-b_N(x))
\le7(1-a_N(x))
=7(1-|a_N(x)|).
\end{align*}

Suppose now that $a_N(x)<0$. By periodicity and symmetry of $a_N(x)$ and $b_N(x)$,
it is enough to consider $0\le x\le\tfrac12$.
On $(0,\tfrac12]$ the denominator in
\[
a_N(x)=\frac{\sin((2N+1)\pi x)}{(2N+1)\sin(\pi x)}
\]
is positive. Hence $a_N(x)<0$ implies that $x$ lies beyond
the first positive zero of the numerator, and therefore satisfies $\frac{1}{2} \ge x>\frac1{2N+1}.$
Using $\sin(\pi x)\ge2x$ on $[0,\tfrac12]$, we obtain
\[
(2N+1)\sin(\pi x)>2,
\]
and hence $a_N(x)\ge-\frac12$.
Therefore, $-|a_N(x)|\ge-\frac12$, and consequently
\[
|b_N(x)-a_N(x)|
\le2
\le4(1-|a_N(x)|)
\le7(1-|a_N(x)|).
\]
This proves \eqref{eq:lemma-seven}, and the proof of the lemma is completed.
\end{proof}

\section{Dimension-free multiplier estimates}
Set $z_{N,s}(x):=(1-s)a_N(x)+sb_N(x)$ for $s\in [0,1)$ and note that
\[
m_{N,s}(\xi):=\widehat{\lambda_{N, s}^{\otimes d}}(\xi)=\prod_{j=1}^d z_{N,s}(\xi_j),
\qquad \xi\in\T^d.
\]
Fix $s_0\in(0,1)$ and assume that $0\le s\le s_0$. For $1\le j\le d$, define $u_j:=1-|a_N(\xi_j)| \in [0,1]$, then
by Lemma~\ref{lem:oned}, we may write
\begin{equation}\label{eq:diffab}
|b_N(\xi_j)-a_N(\xi_j)|\le7u_j.
\end{equation}
Moreover, $|z_{N,s}(\xi_j)|\le(1-s)|a_N(\xi_j)|+s|b_N(\xi_j)|$, and consequently
\begin{align}
\label{eq:zdecay}
|z_{N,s}(\xi_j)|
\le(1-s)(1-u_j)+s
=1-(1-s)u_j
\le e^{-(1-s_0)u_j},
\end{align}
since $-(1-s)\le -(1-s_0)$.

\begin{proposition}\label{prop:derivative}
For every $k\in\N \cup \{0 \}$ and $s_0\in(0,1)$, there exists $C_{k,s_0}>0$ independent of $d\in\N$ such that
\[
\sup_{N\in\N}\sup_{d\in\N}\sup_{0\le s\le s_0}\sup_{\xi\in\T^d}
|\partial_s^k m_{N,s}(\xi)|\le C_{k,s_0}.
\]
\end{proposition}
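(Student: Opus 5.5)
The plan is to differentiate the product $m_{N,s}(\xi)=\prod_{j=1}^d z_{N,s}(\xi_j)$ directly with the Leibniz rule and control every term with \eqref{eq:diffab} and \eqref{eq:zdecay}. Since $z_{N,s}(x)$ is affine in $s$, with $\partial_s z_{N,s}(\xi_j)=b_N(\xi_j)-a_N(\xi_j)$ and all higher $s$-derivatives vanishing, the $k$-th derivative involves only terms where $k$ distinct factors have been differentiated once each:
\[
\partial_s^k m_{N,s}(\xi)=k!\sum_{\substack{J\subseteq\{1,\dots,d\}\\ |J|=k}}\ \prod_{j\in J}\bigl(b_N(\xi_j)-a_N(\xi_j)\bigr)\prod_{j\notin J}z_{N,s}(\xi_j).
\]
For $k=0$ the bound $|m_{N,s}|\le1$ is immediate because $\lambda_{N,s}$ is a probability measure.

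For $k\ge1$, I will bound each term in absolute value. By \eqref{eq:diffab} each differentiated factor is at most $7u_j$. By \eqref{eq:zdecay} each undifferentiated factor is at most $e^{-(1-s_0)u_j}$. This gives
\[
|\partial_s^k m_{N,s}(\xi)|\le k!\,7^k\sum_{|J|=k}\prod_{j\in J}u_j\prod_{j\notin J}e^{-(1-s_0)u_j}.
\]
Since $u_j\in[0,1]$, I can reinsert the missing exponentials at a cost of at most $e^{(1-s_0)k}\le e^k$. Writing $U:=\sum_{j=1}^d u_j$, the right-hand side is then at most
\[
k!\,7^k e^{k}\,e^{-(1-s_0)U}\sum_{|J|=k}\prod_{j\in J}u_j\le 7^ke^k\,U^k e^{-(1-s_0)U},
\]
where I use the elementary inequality $\sum_{|J|=k}\prod_{j\in J}u_j\le U^k/k!$, which follows by expanding the multinomial $(u_1+\dots+u_d)^k$.

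Finally, $\sup_{U\ge0}U^ke^{-(1-s_0)U}=\bigl(k/((1-s_0)e)\bigr)^k$, so I obtain $C_{k,s_0}=7^k k^k(1-s_0)^{-k}$. This constant is independent of $N$, $d$, $s\in[0,s_0]$ and $\xi$.

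The only delicate point is that the number of terms, $\binom dk$, grows with $d$. Bounding the terms individually would lose dimension-freeness. The fix is the step above: the elementary symmetric sum is dominated by $U^k/k!$, and this growth in $U$ is exactly absorbed by the exponential decay $e^{-(1-s_0)U}$ of the remaining product. Everything else is routine.
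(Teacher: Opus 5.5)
Your proposal is correct and follows the paper's proof step for step. The steps match: the Leibniz expansion over $k$-element subsets (using that $z_{N,s}$ is affine in $s$), the bounds \eqref{eq:diffab} and \eqref{eq:zdecay}, reinserting the missing exponentials at cost $e^{(1-s_0)k}$, the bound $\sum_{|J|=k}\prod_{j\in J}u_j\le U^k/k!$, and absorbing $U^k$ into $e^{-(1-s_0)U}$. Your explicit constant $7^k k^k(1-s_0)^{-k}$ is also correct; it is slightly sharper than the paper's $k!$-based bound, but this is immaterial.
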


\begin{proof}
The conclusion is trivial for $k=0$, in which case we may take $C_{0,s_0}=1$. Therefore, we may assume that $k\in\N$.
Since $z_{N,s}$ is affine in $s$, we obtain
\[
\partial_s^k m_{N,s}(\xi)
=
k!\sum_{\substack{E\subseteq\{1,\dots,d\}\\|E|=k}}
\prod_{j\in E}\bigl(b_N(\xi_j)-a_N(\xi_j)\bigr)
\prod_{j\notin E}z_{N,s}(\xi_j).
\]
Let $U:=\sum_{j=1}^d u_j$, and $c:=1-s_0>0$.
By \eqref{eq:diffab} and \eqref{eq:zdecay}, we have 
\begin{align*}
|\partial_s^k m_{N,s}(\xi)|
&\le
k!7^k\sum_{|E|=k}
\Bigl(\prod_{j\in E}u_j\Bigr)
\exp\Bigl(-c\sum_{j\notin E}u_j\Bigr).
\end{align*}
Since $0\le u_j\le1$ and $|E|=k$, we may further write
\begin{align*}
\exp\Bigl(-c\sum_{j\notin E}u_j\Bigr)
=e^{-cU}\exp\Bigl(c\sum_{j\in E}u_j\Bigr)
\le e^{ck}e^{-cU}.
\end{align*}
It follows that
\begin{equation}\label{eq:elementary-sym}
|\partial_s^k m_{N,s}(\xi)|
\le
k!7^ke^{ck}e^{-cU}
\sum_{|E|=k}\prod_{j\in E}u_j.
\end{equation}
Expanding $U^k=(\sum_{j=1}^d u_j)^k$, each monomial $\prod_{j\in E}u_j$ with $|E|=k$ occurs exactly $k!$ times among the terms with pairwise distinct indices. Since all remaining terms are nonnegative, we obtain
\[
\sum_{|E|=k}\prod_{j\in E}u_j\le\frac{U^k}{k!}.
\]
Inserting this bound into \eqref{eq:elementary-sym}, we conclude that there exists $C_{k,s_0}>0$ such that
\[
|\partial_s^k m_{N,s}(\xi)|
\le7^ke^{ck}e^{-cU}U^k
\le C_{k,s_0},
\]
since $\sup_{U\ge0}e^{-cU}U^k \le c^{-k} k!$. This completes the proof.
\end{proof}

\section{An interpolation lemma}
In this section we prove Proposition \ref{prop:interpolation}, which is a modification of the interpolation argument from \cite[Section~6]{MSW} and will allow us to deduce Theorem \ref{thm:difference} very quickly in the next section. The key point is that, by choosing the order of differentiation appropriately, the interpolation point can be taken to be the integer $1$.

Let $T_t f=K_t*f$, for $t\ge1$, be a family of convolution operators on $\ell^p(\Z^d)$, where $K_t\in\ell^1(\Z^d)$, and let $m_t = \mathcal F_{\Z^d} K_{t}$ denote the corresponding Fourier multipliers, that is, $\mathcal F_{\Z^d}T_t f(\xi)= m_t(\xi)\mathcal F_{\Z^d}f(\xi)$.
Let $n \in \N$ and suppose that $t\mapsto m_t(\xi)$ is of class $C^n([1,\infty))$ for every $\xi\in\T^d$ and that
\begin{equation}\label{eq:L1}
\sup_{t\ge1}\|T_t\|_{\ell^1(\Z^d)\to\ell^1(\Z^d)}\le A_0,
\end{equation}
and
\begin{equation}\label{eq:deriv-ass}
\max_{0\le j\le n}\sup_{t\ge1}\sup_{\xi\in\T^d}
 |t^j\partial_t^j m_t(\xi)|\le A_n.
\end{equation}
Here and below, $\partial_tT_t$ denotes the multiplier operator with multiplier $\partial_tm_t$.

\begin{proposition}\label{prop:interpolation}
Let $1<p<2$. Then there exists a constant $C_p>0$ such that for any integer $n\in\N$ satisfying ${p'}/2<n<p'$, we have
\[
\sup_{t\ge1}\|t\partial_tT_t\|_{\ell^p(\Z^d)\to\ell^p(\Z^d)}
\le C_p (A_0+A_n).
\]
\end{proposition}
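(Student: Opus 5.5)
The plan is to run Stein's complex interpolation for an analytic family of fractional derivatives in $t$. One endpoint is $\ell^1$, at a fractional order slightly \emph{below} $0$ (that is, a fractional integral). The other endpoint is $\ell^2$, at an order slightly below $n$. The exponents are arranged so that the interpolated space is $\ell^p$ and the interpolated order is exactly the integer $1$.

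\emph{Choice of parameters.} Let $\theta:=2/p'\in(0,1)$, so that $\frac1p=\frac{1-\theta}{1}+\frac{\theta}{2}$. We want orders $a<0<1<b<n$ with $(1-\theta)a+\theta b=1$. With $b=n$ this gives
\[
a=\frac{1-2n/p'}{1-2/p'},
\]
which is negative exactly when $n>p'/2$. The condition $n<p'$ should keep $a$ in a range, roughly $a>-1$, where the fractional integrals are harmless. We then perturb to $b=n-\delta$ and $a=a(\delta)<0$ for a small $\delta>0$, keeping $(1-\theta)a+\theta b=1$. This is the only place the hypothesis $p'/2<n<p'$ enters.

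\emph{The analytic family.} For $\operatorname{Re} z<n$ and $t\ge1$ set
\[
S_z T_t:=\frac{t^{z}e^{z^2}}{\Gamma(n-z)}\int_1^t (t-s)^{n-z-1}\,\partial_s^nT_s\,ds .
\]
This is a Caputo-type derivative of order $z$, and the factor $e^{z^2}$ tames the growth of $1/\Gamma$ along vertical lines. It is analytic in $z$.
\begin{itemize}
\item On $\operatorname{Re} z=b=n-\delta$, the $\ell^2$ bound follows from \eqref{eq:deriv-ass}. Indeed, $|\partial_s^n m_s|\le A_ns^{-n}$, so the multiplier is bounded by
\[
CA_nt^{n-\delta}\int_1^t(t-s)^{\delta-1}s^{-n}\,ds\lesssim A_n,
\]
uniformly in $t$.
\item On $\operatorname{Re} z=a<0$, we integrate by parts $n$ times to move all derivatives off $T_s$. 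This leaves the Riemann--Liouville integral $\frac{1}{\Gamma(-z)}\int_1^t(t-s)^{-z-1}T_s\,ds$. Its $\ell^1$ operator norm is at most $A_0t^{a}(t-1)^{-a}/|\Gamma(1-z)|\lesssim A_0$ by \eqref{eq:L1}.
\end{itemize}
Stein's interpolation then bounds $S_1T_t$ on $\ell^p$ by $C_p(A_0+A_n)$, uniformly in $t$.

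\emph{Main obstacle: boundary terms at $s=1$.} The Caputo form does not commute with the integrations by parts. They produce terms of the type $\partial_s^jT_1\,(t-1)^{j-z}/\Gamma(j-z+1)$, and $S_1T_t$ is not exactly $t\partial_tT_t$ but differs from it by a Taylor polynomial at $t=1$. Multiplied by $t^{\operatorname{Re} z}$, these terms grow in $t$. I would handle this first by reparametrizing $u=\log t$. Then $t\partial_t=\partial_u$, and \eqref{eq:deriv-ass} becomes uniform bounds on $\partial_u^jm$ for $j\le n$, since $t^j\partial_t^j$ is a fixed polynomial in $\partial_u$. I would then use Weyl derivatives on the whole line after extending $m$ to $u<0$ in a $C^n$ way. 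The extension must keep the $\ell^1$ bounds, for example by a Hestenes/Seeley-type reflection $\sum_i c_iT_{e^{-\lambda_iu}}$, whose $\ell^1$ norms are controlled by \eqref{eq:L1}. A decaying weight such as $e^{-\varepsilon|u|}$ may be needed to make the Weyl integrals converge at $-\infty$.

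Checking that this extension preserves both endpoint bounds, and that the exponential weights in $u$ do not spoil the $\ell^1$ endpoint, is the step I expect to require the most care.
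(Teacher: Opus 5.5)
There is a genuine gap. The family $S_zT_t$ you write down fails at both endpoints, and the repair in your last paragraph is only sketched.

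\textbf{The $\ell^2$ line.} Your bound on $\operatorname{Re}z=n-\delta$ is wrong. Since $(t-s)^{\delta-1}\ge t^{\delta-1}$, the range $1\le s\le t/2$ alone contributes $t^{n-\delta}\,t^{\delta-1}\int_1^{t/2}s^{-n}\,ds\approx t^{n-1}$. Here $n\ge2$, because $n>p'/2>1$. This is not an artefact of the estimate: $m_t=t^{-1}$ satisfies \eqref{eq:L1} and \eqref{eq:deriv-ass}, and for it your multiplier really grows like $t^{n-1}$.

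\textbf{The $\ell^1$ line.} On $\operatorname{Re}z=a$, the $n$ integrations by parts leave terms $t^{z}(t-1)^{j-z}\partial_s^jT_1$, $0\le j\le n-1$, up to $\Gamma$-factors. Their growth in $t$ is not the only problem. For $j\ge1$ nothing bounds $\|\partial_s^jT_1\|_{\ell^1\to\ell^1}$: only $T_s$ itself is assumed $\ell^1$-bounded, and its $s$-derivatives are known only as bounded multipliers. Upgrading that is the whole point of the proposition, so the base point must disappear completely.

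\textbf{The proposed repair.} Your reparametrization aims at removing the base point, but you leave it unverified, and the weight you suggest breaks uniformity. Put $M(u):=T_{e^u}$ and use $e^{-\eps|u|}$, centred at $u=0$. At $u_0=\log t>0$ you get $\partial_u(e^{-\eps u}M)(u_0)=e^{-\eps u_0}\bigl(M'(u_0)-\eps M(u_0)\bigr)$, which costs a factor $t^{\eps}$ in the bound for $t\partial_tT_t$. Besides, $e^{-\eps|u|}$ is not $C^n$. To make this route work you would need a smooth, rapidly decaying cutoff $\phi(v-u_0)$ with $\phi(0)=1$, centred at the evaluation point so that translation invariance in $v=\log s$ gives uniformity, together with the reflection extension to $v<0$. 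None of this is carried out.

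\textbf{The missing idea.} Differentiate from the right, over $(u,\infty)$, where $T_s$ is defined; then there is no base point and no extension. Decay at infinity comes from applying the fractional derivative to $h_\xi(s)=m_s(\xi)/s$ with normalisation $u^{z+1}$, as in \eqref{eq:puz}.
\begin{itemize}
\item By \eqref{eq:deriv-ass}, $|\partial_s^rh_\xi(s)|\lesssim A_ns^{-r-1}$, and the substitution $s=u(1+v)$ makes both endpoint bounds independent of $u$.
\item All boundary terms vanish: at $s=u$ because of positive powers of $s-u$, and at $\infty$ because they are $O(s^{-\operatorname{Re}z-1})$.
\item The $\ell^1$ line becomes $-\frac{zu^{z+1}}{\Gamma(1-z)}\int_u^\infty(s-u)^{-z-1}T_s\,\frac{ds}{s}$, which is controlled by \eqref{eq:L1}.
\item At $z=1$ one gets exactly $m_u-u\partial_um_u$, and the $T_u$ term is harmless by \eqref{eq:L1}.
\end{itemize}
Because this weight decays only like $s^{-1}$, the condition $\operatorname{Re}z>-1$ is really needed there. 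That is why the paper interpolates between $\operatorname{Re}z=-\eps$ and $\operatorname{Re}z=n-\eps$ with $\eps=2n/p'-1\in(0,1)$.

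\textbf{Parameters.} Your small perturbation $b=n-\delta$ does not give $a>-1$ in general. From $(1-\theta)a+\theta b=1$, the condition $a>-1$ forces $b<p'-1$. For instance, $p'=7/2$, $n=3$ gives $a\to-5/3$ as $\delta\to0$.
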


\begin{proof}
Fix $\xi \in \T^{d}$ and consider $h_\xi(s):={m_s(\xi)}s^{-1}$. Since $\partial_s^{m}(s^{-1})=(-1)^mm!s^{-m-1}$, then 
by Leibniz' rule we have
\[
\partial_s^rh_\xi(s)
=
\sum_{j=0}^r\binom rj
\partial_s^jm_s(\xi)\,(-1)^{r-j}(r-j)!s^{-r+j-1},
\]
and consequently by \eqref{eq:deriv-ass}, for every $0\le r\le n$, we obtain
\begin{equation}\label{eq:hder-all}
|\partial_s^rh_\xi(s)|\le er! A_n s^{-r-1}.
\end{equation}
Fix $u\ge1$. For $z\in S_n:=\{z\in\C:-1<\operatorname{Re} z<n\}$, we define
\begin{equation}\label{eq:puz}
p_u^z(\xi)
:=
\frac{(-1)^n u^{z+1}}{\Gamma(n-z)}
\int_u^\infty (s-u)^{n-z-1}\partial_s^nh_\xi(s)\,ds.
\end{equation}
The function $S_n\ni z\mapsto p_u^z(\xi)$ is holomorphic by Morera's theorem and Fubini's theorem, since by \eqref{eq:hder-all} the integrand is integrable. Indeed, near $s=u$ this follows from $\operatorname{Re} z<n$, whereas at infinity the integrand is $O(s^{-\operatorname{Re} z-2})$. Thus, if $P_u^z$ denotes the multiplier operator associated with $p_u^z$, then for any $f,g\in \ell^2(\Z^d)$, Plancherel's theorem and Fubini's theorem imply that the function
$S_n\ni z\longmapsto \langle P_u^zf,g\rangle_{\ell^2(\Z^d)}$ is analytic in $S_n$. Recall that the standard vertical-line estimates for the Gamma function assert that
if $\operatorname{Re} z\ge -1$, then
\begin{align}
\label{eq:5}
\frac{1}{|\Gamma(z)|}\le 2 \Big(\sqrt{1+|\operatorname{Im} z|^2}\Big)^{1/2-\operatorname{Re} z}e^{\pi|\operatorname{Im} z|/2}.
\end{align}

We will use \eqref{eq:5} to prepare the ground for an application of Stein's complex interpolation theorem to the operators $P_u^z$; see, for example, \cite[Theorem~1.3.7 and Exercise~1.3.4]{Gra1}. For this purpose, we set $\eps={2n}/{p'}-1$ and note that $\eps\in(0,1)$, since  $p'/2<n<p'$. We will then estimate the norms of the operators $P_u^z$ on the two lines $z=n-\eps+i\tau$ and $z=-\eps+i\tau$. In Stein's complex interpolation theorem, any growth of the operator norms smaller than a constant multiple of the quantity $\exp(\exp(\gamma |\operatorname{Im}z|/n))$, with $0\le\gamma<\pi$, is admissible.

We begin by noting  that there exists a constant $C_{n, p}>0$ such that for any $f, g\in\ell^2(\Z^d)$ and  any $z\in\{z\in\mathbb C: -\eps< \operatorname{Re} z< n-\eps\}$, we have
\begin{align}
\label{eq:6}
|\langle P_u^zf,g\rangle_{\ell^2(\Z^d)}|\le C_{n, p}A_n\|f\|_{\ell^2(\Z^d)}\|g\|_{\ell^2(\Z^d)}e^{3\pi|\operatorname{Im}z|/4}.
\end{align}
Estimate \eqref{eq:6} follows from Plancherel's theorem, together with \eqref{eq:hder-all} and \eqref{eq:5}, the latter being used to control $|\Gamma(n-z)|^{-1}$ whenever $\operatorname{Re}(n-z)>\eps$, and then to estimate the multiplier in \eqref{eq:puz}, yielding the upper bound in \eqref{eq:6}. Now we examine the behavior on the boundary.
\smallskip

\paragraph{\textbf{Behavior $P_u^z$ on the line $z=n-\eps+i\tau$}} Using \eqref{eq:hder-all} and \eqref{eq:5} and the substitution $s=u(1+v)$, we deduce that there exists a constant $C_{n, p}>0$ such that
\begin{align*}
|p_u^z(\xi)|
&\le en!A_n
\frac{u^{n-\eps+1}}{|\Gamma(\eps-i\tau)|}
\int_u^\infty (s-u)^{\eps-1}s^{-n-1}\,ds\\
&=
\frac{en!A_n}{|\Gamma(\eps-i\tau)|}
\int_0^\infty v^{\eps-1}(1+v)^{-n-1}\,dv
\le C_{n, p}A_ne^{3\pi|\tau|/4}.
\end{align*}
Consequently, Plancherel theorem gives
\begin{equation}\label{eq:L2bd}
\|P_u^{n-\eps+i\tau}\|_{\ell^2(\Z^d)\to\ell^2(\Z^d)}
\le C_{n, p}A_ne^{3\pi|\tau|/4}.
\end{equation}

\paragraph{\textbf{Behavior $P_u^z$ on the line $z=-\eps+i\tau$}} 
Since $-1<\operatorname{Re} z<0$, integrating \eqref{eq:puz} by parts $n$ times gives
\begin{equation}\label{eq:leftrep}
p_u^z(\xi)
=
-\frac{zu^{z+1}}{\Gamma(1-z)}
\int_u^\infty (s-u)^{-z-1}\frac{m_s(\xi)}s\,ds.
\end{equation}
Indeed, fixing $1\le k\le n$, after the $k$-th integration by parts in \eqref{eq:puz}, the relevant boundary expression is a constant multiple of
\[
(s-u)^{n-z-k}\partial_s^{n-k}h_\xi(s).
\]
By \eqref{eq:hder-all}, this is $O(s^{-\operatorname{Re} z-1})$ as $s\to\infty$, and hence tends to zero because $\operatorname{Re} z>-1$. At $s=u$ it also vanishes, since $\operatorname{Re}(n-z-k)\ge-\operatorname{Re} z>0$. Therefore, all boundary terms vanish, and \eqref{eq:leftrep} follows by noting that
\[
\frac{(n-z-1)(n-z-2)\cdots(1-z)(-z)}{\Gamma(n-z)}=\frac{-z}{\Gamma(1-z)}.
\]

Now on the line $z=-\eps+i\tau$, formula \eqref{eq:leftrep}, \eqref{eq:L1} and \eqref{eq:5} yield that there exists a constant $C_{n, p}>0$ such that
\begin{align*}
\|P_u^z\|_{\ell^1(\Z^d)\to\ell^1(\Z^d)}
&\le
\frac{|z|u^{1-\eps}}{|\Gamma(1-z)|}
\int_u^\infty (s-u)^{\eps-1}
\|T_s\|_{\ell^1(\Z^d)\to\ell^1(\Z^d)}\,\frac{ds}{s}\\
&\le
\frac{|z|A_0}{|\Gamma(1-z)|}
\int_0^\infty \frac{v^{\eps-1}}{1+v}\,dv\le C_{n, p}A_0e^{3\pi|\tau|/4}.
\end{align*}
Thus
\begin{equation}\label{eq:L1bd}
\|P_u^{-\eps+i\tau}\|_{\ell^1(\Z^d)\to\ell^1(\Z^d)}
\le C_{n, p}A_0e^{3\pi|\tau|/4}.
\end{equation}
Since the implied constants in \eqref{eq:6}, \eqref{eq:L2bd}, and \eqref{eq:L1bd} may depend on the parameters $n$ and $p$, the exponential bounds obtained there can be dominated by a constant multiple of the double exponential bound $\exp(\exp(\gamma |\operatorname{Im}z|/n))$, for some $0\le\gamma<\pi$, which is admissible in Stein's complex interpolation theorem, which we now invoke.
Let $\theta=2/{p'}$, 
then $1/p=(1-\theta)+\theta/2$, 
and, by the definition of $\eps={2n}/{p'}-1$, we have 
\[
(1-\theta)(-\eps)+\theta(n-\eps)
=-\eps+\frac{2n}{p'}=1.
\]
Interpolating between \eqref{eq:L2bd} and \eqref{eq:L1bd} using Stein's complex interpolation theorem, we deduce that there exists a constant $C_{n,p}>0$ such that
\begin{equation}\label{eq:Pu1}
\|P_u^{(1-\theta)(-\eps)+\theta(n-\eps)}\|_{\ell^p(\Z^d)\to\ell^p(\Z^d)}
\le C_{n, p} A_0^{1-\theta}A_n^\theta
\le C_{n, p} (A_0+A_n).
\end{equation}
It remains to identify $P_u^1$.  Putting $z=1$ in \eqref{eq:puz}, we see
\[
p_u^1(\xi)
=
\frac{(-1)^nu^2}{\Gamma(n-1)}
\int_u^\infty (s-u)^{n-2}\partial_s^nh_\xi(s)\,ds.
\]
Since $n>p'/2>1$, we have $n\ge2$, then repeated integration by parts gives
\begin{align*}
\int_u^\infty (s-u)^{n-2}\partial_s^nh_\xi(s)\,ds
&=
(-1)^{n-2}(n-2)!\int_u^\infty h_\xi''(s)\,ds\\
&=
(-1)^{n-1}(n-2)!h_\xi'(u).
\end{align*}
Here the intermediate boundary terms at $s=u$ vanish because they contain a positive power of $s-u$, while the boundary terms at infinity are $O(s^{-2})$ by \eqref{eq:hder-all}. In the last equality we also used $h_\xi'(s)\to0$ as $s\to\infty$. Since $\Gamma(n-1)=(n-2)!$, it follows that
\begin{align*}
p_u^1(\xi)
=-u^2h_\xi'(u)
=m_u(\xi)-u\partial_um_u(\xi).
\end{align*}
Thus $P_u^1=T_u-u\partial_uT_u$. By \eqref{eq:L1} and Young's convolution inequality, $\|T_u\|_{\ell^p(\Z^d)\to\ell^p(\Z^d)}\le A_0$, and consequently, together with \eqref{eq:Pu1}, this proves the proposition.
\end{proof}

\section{Proof of the difference estimate from Theorem \ref{thm:difference}}

Choose $s_0\in(2/5,1)$ and a smooth function $\rho:[1,\infty)\to[0,s_0]$
such that $\rho(t):=t-1$ for $1\le t\le1+\frac25$,
and $\rho$ is constant for all sufficiently large $t\ge 1$. Let $T_tf:=\lambda_{N, \rho(t)}^{\otimes d}*f$, since $\lambda_{N, \rho(t)}^{\otimes d}$ is a probability measure, then $\|T_t\|_{\ell^1(\Z^d)\to\ell^1(\Z^d)}\le1$.
By Proposition~\ref{prop:derivative} and the chain rule, for every $k\in\N \cup\{0\}$, there exists a constant $D_k>0$ such that
\[
\sup_{t\ge1}\sup_{\xi\in\T^d}
 |t^k\partial_t^km_{N, \rho(t)}(\xi)|\le D_k,
\]
uniformly in $N$ and $d$, since, all derivatives of $\rho$ are supported in a fixed compact subset of $[1,\infty)$.
Proposition~\ref{prop:interpolation} therefore gives, for every $1<p<2$, a constant $C_p>0$ such that
\[
\|t\partial_tT_t\|_{\ell^p(\Z^d)\to\ell^p(\Z^d)}\le C_p.
\]
Since $N\ge1$, we have $\beta_N\le2/5$, and hence $T_1=\M_N$ and $T_{1+\beta_N}=\M_{N+1}$.
Therefore
\begin{align*}
\|\M_{N+1}-\M_N\|_{\ell^p(\Z^d)\to\ell^p(\Z^d)}
&\le
\int_1^{1+\beta_N}\|\partial_tT_t\|_{\ell^p(\Z^d)\to\ell^p(\Z^d)}\,dt\\
&\le C_p
\int_1^{1+\beta_N}\frac{dt}{t}\\
&\le C_p\beta_N.
\end{align*}
This proves Theorem~\ref{thm:difference} for $1<p<2$, since $\beta_N\le N^{-1}$. For $p=2$, the same conclusion follows directly from Proposition~\ref{prop:derivative} and the Plancherel theorem. Finally, since $\M_{N+1}-\M_N$ is self-adjoint, the range $p>2$ follows by duality.

\end{document}